\newtheorem{thm}{Theorem}[section]
\newtheorem{prop}[thm]{Proposition}
\newtheorem{lem}[thm]{Lemma}
\newtheorem{cor}[thm]{Corollary}
\newtheorem{defi}[thm]{Definition}
\newtheorem{rem}[thm]{Remark}
\renewcommand{\P}{\mathbb{P}}
\newcommand{\Z}{\mathbb{Z}}
\newcommand{\C}{\mathcal{C}}
\title{Rational cubic fourfolds with associated singular K3 surfaces}
\author[H.Awada]{Hanine AWADA}
\address{Institut Montpellierain Alexander Grothendieck \\ %
CNRS \\ %
Universit\'e de Montpellier \\ %
Case Courrier 051 - Place Eug\`ene Bataillon \\ %
34095 Montpellier Cedex 5 \\ %
France}
\email{hanine.awada@umontpellier.fr}
\begin{document}
\begin{abstract}
Generalizing a recent construction of Yang and Yu, we study to what extent one can intersect Hassett's Noether-Lefschetz divisors $\C_d$ in the moduli space of cubic fourfolds $\C$. In particular, we exhibit arithmetic conditions on 20 indexes $d_1,\dots, d_{20}$ that assure that the divisors $\C_{d_1},\dots,\C_{d_{20}}$ all intersect one another. This allows us to produce examples of rational cubic fourfolds with an associated K3 surface with rank 20 N\'eron-Severi group, i.e. a singular K3 surface.
 
\end{abstract}

\maketitle

{}\section{Introduction} \label{sec:introduction}
A lot of the interest around smooth cubic fourfolds is related to their rationality, which is still an open problem for the generic one. The general expectation is that a very general cubic fourfold is nonrational but until now only (few) examples of rational cubic fourfolds are known (see \cite{MR818549}, \cite{MR2390291}, \cite{Kuznetsov_2009}).\\
 
Hassett \cite{hassett_2000} introduced the notion of "\textit{special}" cubic fourfold, that is a cubic that contains an algebraic surface not homologous to a complete intersection. These fourfolds form a countable infinite union of divisors $\C_d$ called \it Noether-Lefschetz divisors \rm inside the moduli space of smooth cubic fourfolds $\C$, which  is a 20-dimensonal quasi-projective variety. Hassett showed that $\C_d$ is irreducible and nonempty if and only if $d>6$ and $d \equiv 0,2 [6]$. Only few $\C_d$ have been defined explicitly in term of surfaces contained in a general element of these divisors (see \cite{MR3934590}, \cite{MR1658216}, \cite{2016arXiv160605321A}, \cite{MR3968870}). More recently, we have deployed similar techniques to study the birational geometry of universal cubics over certain divisors $\C_d$ \cite{AB20}. Hassett \cite{hassett_2000} proved that, for an infinite subset of values of $d$, one can associate a polarized K3 surface $(S,f)$ of degree $d$ to a cubic fourfold in $\C_d$. This is true for $d$ satisfying $4 \not \vert d, 9 \not \vert d$, and $ p \not \vert d$ for any odd prime number $p \equiv 2 [3]$. A natural conjecture, supported by Hassett's Hodge theoretical work (\cite{hassett_2000}, \cite{MR1658216}) and Kuznetsov's derived categorical work \cite{Kuznetsov_2009}, is that any rational cubic fourfold ought to have an associated K3 surface. A link between rationality and the transcendental motive of a cubic fourfold has also been described in \cite{BP20}. For now, every fourfold in $\C_{14}$, $\C_{26}$, $\C_{38}$ and $\C_{42}$ has been proved to be rational (see \cite{MR3968870}, \cite{MR3934590}, \cite{russo2019trisecant}). These cubics all have an associated K3 surface.

\smallskip
The papers \cite{MR1658216} and \cite{2016arXiv160605321A} exhibit examples of rational cubic fourfolds in $\C_8$ (locus of cubic fourfolds containing a plane) and $\C_{18}$ (locus of fourfolds containing an elliptic ruled surface) parametrized by the union of a countably infinite family of codimension two subvarities in $\C$. In fact these fourfolds, fibered respectively in quadric surfaces and sextic del Pezzo surfaces over $\P^2$, are rational whenever the fibration has a rational section. In order to study further these divisors, using lattice theoretic computations,  the paper \cite{MR3238111} studies the intersection of $\C_8$ with $\C_{14}$. The latter divisor corresponds to cubics containing a quartic scroll. This allowed to describe explicitly the intersection in terms of irreducible components and to introduce new examples of rational cubic fourfolds whose associated quadric surface bundle doesn't have a section. More recently, in \cite{awada2019rational} we studied the intersection of $\C_{18}$ with other divisors $\C_{14}, \C_{26}$ and $\C_{38}$ and give new explicit example of rational fourfold whose del Pezzo sextic fibration has no sections.
 
Lately, Yang and Yu \cite{2019arXiv190501936Y} proved that any two Hassett divisors intersect. Moreover, they proved that $\C_{14}, \C_{26}$  and $\C_{38}$ intersect all Hassett divisors. As a consequence, they conclude that every Hassett divisor $\C_d$ contains a union of three codimension two subvarieties in $\C$  parametrizing rational cubic fourfolds (see \cite[Theorem 3.3]{2019arXiv190501936Y}). Actually they are four, since in the meanwhile also $\C_{42}$ has been shown to parametrize rational cubics. However, it seems natural to ask about the intersection of more than two divisors. Using lattice theoretical computations and a result of \cite{2019arXiv190501936Y}, we  prove the following:\\
 
 \medskip
 \textbf{Theorem:} For $3\leq n \leq 20$,  
 \begin{center}
     $\displaystyle{\bigcap_{k=1}^{n}} \C_{d_k} \ne \emptyset$,
 \end{center} 
 for $d_k \geq 8, d_k \equiv 0,2[6]$ and $d_3,..,d_n= 6 \displaystyle{\prod_i} p_i^2$ or $6 \displaystyle{\prod_i} p_i^2+2$ with $p_i$ a prime number.\\
 In particular, up to 20 Hassett divisors in $\C$ have non-empty intersection.\\
 
 \medskip
 
 These 20 divisors can be all chosen in a way such that their cubics have associated polarized K3 surfaces. This implies that cubics in the intersection of the 20 divisors have an associated K3 with N\'{e}ron-Severi group of rank 20. 
As it is well-known \cite{shioda_inose_1977} K3 surfaces $S$ with Picard number $\rho(S)=rank(NS(S))=20$, the maximal possible, are called \it singular K3 surfaces. \rm Moreover, a special feature of this set of  K3 surfaces is that it is bijective to the set of equivalence classes of positive-definite even integral
binary quadratic forms with respect to $\mathcal{SL}_2(\mathbb{Z})$.
 
 
 \smallskip
\textbf{Plan of the paper:} We first introduce in \S 2 some lattice theory related to cubic fourfolds. Then, in \S 3, by lattice computations, we prove the nonemptiness of the intersection of certain divisors $\C_d$ in $\C$ and give examples of rational cubic fourfold with associated singular K3 surfaces.

\smallskip Computations were done using Macaulay2 \cite{M2} and Sagemath \cite{sagemath}.\\
 

\smallskip
\textbf{Acknowledgement:} It is a pleasure to thank my PhD advisor Michele Bolognesi for his permanent support.
\section{Lattice theory for cubic fourfolds } 
Let $X$ be a smooth cubic fourfold. Next, we will set some notation for latter use.\\

Let $E_8^1$ and $E_8^2$ be two copies of $E_8$, the unimodular positive definite even rank 8 lattice associated to the corresponding Dynkin diagram represented by the following matrix in the basis $<t_k^i>$, for $k=1,..,8$, $i=1,2$:

\medskip
\begin{equation*}
E_8= 
\begin{pmatrix}
2&-1&0&0&0&0&0&0 \\
-1 &2& -1 &0&0&0&0&0\\
0&-1&2&-1&-1&0&0&0\\
0&0&-1&2&0&0&0&0\\
0&0&-1&0&2&-1&0&0\\
0&0&0&0&-1&2&-1&0\\
0&0&0&0&0&-1&2&-1\\
0&0&0&0&0&0&-1&2
\end{pmatrix}
\end{equation*} \\
In addition, let $U^1$ and $U^2$ be two copies of the hyperbolic plane  $U = \begin{pmatrix}
0 & 1 \\
1 & 0 \\
\end{pmatrix}$.
 The basis of $U^k$ consists of vectors $e^k_1$ and $e^k_2$, $k =1, 2$  such that 
 
\medskip
\begin{center}
    $<e^k_1,e^k_1>=0, <e^k_1,e^k_2>=1, <e^k_2,e^k_2>=0$.
\end{center}
\medskip
Let $a_1$ and $a_2$ be the generators of $A_2 = \begin{pmatrix}
2 & 1 \\
1 & 2 \\
\end{pmatrix}$ such that 

\medskip
\begin{center}
 $<a_i,a_i>=2, <a_i,a_j>=1$, $\forall i, j =1, 2$.  
\end{center}
\medskip
Using previous notations, we denote by $L$ the middle integral cohomology lattice of $X$ as follows

\medskip
$$
    \begin{aligned}
       L:= H^4(X,\Z) &\simeq& E_8^{1} \bigoplus E_8^{2} \bigoplus U^{1}\bigoplus U^{ 2} \bigoplus I_{3,0}\\ &\simeq& (+1)^{\bigoplus 21} \bigoplus (-1)^{\bigoplus 2},
    \end{aligned}
$$\\
and $L_0$ the primitive cohomology lattice of signature 22 such that

\medskip
$$ \begin{aligned}
    L_0:= H^4(X,\Z)_0 := <h^2>^{\perp} &\simeq& E_8^{1} \bigoplus E_8^{2} \bigoplus U^{1}\bigoplus U^{ 2} \bigoplus A_2
\end{aligned}$$\\
with $h$ the hyperplane class defined by the embedding of $X$ is $\P^5$:

\begin{center}
    $h^2 =(1,1,1) \in I_{3,0}$.
\end{center}

\medskip
Finally, knowing that the integral Hodge conjecture holds for $X$, we denote by $A(X)=H^4(X,\Z) \cap H^{2,2}(X)$ the lattice of algebraic 2-cycles on $X$ up to rational equivalence.

\medskip
\begin{defi}
The \textit{labelling} of a special cubic fourfold consists of a positive definite rank two saturated sublattice $K$, with $h^2 \in K \subset A(X)$. 
\end{defi}

The discriminant $d$ of a labelling  $K_d =\Z h^2+\Z S$, with $S$ an algebraic surface not homologous to $h^2$, is the determinant of the intersection form on $K_d$:

\medskip
$$
K_d= \begin{tabular}{ l| c r}
& $h^2$ & $S$ \\
\hline
$h^2$ & 3 & $<h^2,S>$\\
$S$ & $<h^2,S>$ & $<S,S>$\\
\end{tabular}
$$

\medskip
$\C_d$ denotes the divisor of special cubic fourfolds with a labelling of discriminant $d$. It is the closure of the locus of cubic fourfolds containing $S$. 

\section{Intersection of divisors}

In this section, we will study the intersection of Hassett divisors $\C_d \in \C$. In particular, we will set the discriminants $d_k$ for which the intersection $\displaystyle{\bigcap_k \C_{d_k}}$ is surely nonempty.

\begin{prop}\cite[Theorem 1.0.1]{hassett_2000}
The Hassett divisor $\C_d \in \C$ is an irreducible and nonempty divisor if and only if 
\begin{center}
    $d\geq8$ and $d \equiv 0,2 [6]$ $(*)$
\end{center} 
\end{prop}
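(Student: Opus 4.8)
The plan is to split the biconditional into necessity of the numerical constraints on $d$ and sufficiency via an explicit construction together with a separate irreducibility argument. For necessity of the congruence $(*)$, suppose $\C_d \neq \emptyset$ and pick $X \in \C_d$ with a labelling $K_d = \Z h^2 + \Z S \subset A(X)$ of Gram matrix $\begin{pmatrix} 3 & a \\ a & b \end{pmatrix}$, where $a = \langle h^2, S\rangle$ and $b = \langle S, S\rangle$ are integers, so $d = 3b - a^2$. Reducing modulo $3$ gives $d \equiv -a^2 \pmod 3$, hence $d \equiv 0$ or $2 \pmod 3$, ruling out $d \equiv 1 \pmod 3$. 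The key observation I would record next is that, since $L \simeq (+1)^{\oplus 21}\oplus(-1)^{\oplus 2}$ is an \emph{odd} unimodular lattice in which every $E_8^i$ and $U^i$ summand is even, the class $h^2 = (1,1,1) \in I_{3,0}$ is a characteristic vector: $\langle h^2, x\rangle \equiv \langle x, x\rangle \pmod 2$ for all $x \in L$. Applying this to $x = S$ yields $a \equiv b \pmod 2$, so $d = 3b - a^2 \equiv b - a \equiv 0 \pmod 2$. Combining $d$ even with $d \not\equiv 1 \pmod 3$ leaves exactly $d \equiv 0, 2 \pmod 6$.

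For the lower bound $d \geq 8$, the only values $\leq 6$ satisfying $(*)$ are $d = 2$ and $d = 6$, which I would exclude geometrically rather than arithmetically. By Voisin's Torelli theorem the period map identifies $\C$ with an open subset of an arithmetic quotient $D/\Gamma$, and by the work of Laza and Looijenga its image is precisely the complement of the two Heegner divisors of discriminant $2$ and $6$; these loci parametrize period points of singular (respectively degenerate) cubics and contain no smooth cubic fourfold. Hence $\C_2 = \C_6 = \emptyset$, forcing $d \geq 8$.

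Conversely, assume $d \geq 8$ with $d \equiv 0, 2 \pmod 6$. I would first exhibit an abstract positive-definite labelling by taking $(a,b) = (0, d/3)$ when $3 \mid d$ and $(a,b) = (1,(d+1)/3)$ when $d \equiv 2 \pmod 3$; in both cases $a,b \in \Z$, and $\begin{pmatrix} 3 & a \\ a & b\end{pmatrix}$ is positive definite of determinant $d > 0$. Next I would invoke Nikulin's existence criterion for primitive embeddings: since $L$ is indefinite of large rank $23$ while $K_d$ has rank $2$, the signature and discriminant-form conditions are easily satisfied, so $K_d$ embeds primitively in $L$ with $h^2 \mapsto h^2$ and orthogonal complement of signature $(19,2)$ inside $L_0 \simeq E_8^1 \oplus E_8^2 \oplus U^1 \oplus U^2 \oplus A_2$. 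A very general period point on the associated Heegner divisor $D_{K_d}$ avoids the excluded divisors of discriminant $2$ and $6$ (as $d \neq 2, 6$), hence lies in the image of the period map and corresponds to a smooth cubic fourfold $X$ with $K_d \subseteq A(X)$; thus $X \in \C_d$ and $\C_d \neq \emptyset$.

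It remains to prove irreducibility. The irreducible components of $\C_d$ are in bijection with the $\Gamma$-orbits of primitive rank-two sublattices $K \subset L$ of discriminant $d$ containing $h^2$, equivalently with the orbits of the corresponding Heegner divisors. Here I would apply Eichler's criterion (equivalently Nikulin's uniqueness theorem for primitive embeddings): because $L_0$ contains the two orthogonal hyperbolic planes $U^1 \oplus U^2$, the $\Gamma$-orbit of such an embedding is determined by the isometry class of $K$ together with the induced discriminant data, all of which are pinned down by $d$ and the congruence class $(*)$. These data single out a unique genus consisting of a single class, so there is exactly one orbit and $\C_d$ is irreducible. The main obstacle is precisely this arithmetic–geometric interface: controlling which Heegner divisors actually meet the image of the period map (the exclusion of discriminants $2$ and $6$, resting on the Torelli and Laza--Looijenga results) and verifying the single-orbit statement through Eichler's theory; by contrast the parity and congruence bookkeeping of the first paragraph is routine once $h^2$ is identified as the characteristic vector of $L$.
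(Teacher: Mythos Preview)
The paper gives no proof of this proposition; it is stated purely as a citation of Hassett's theorem \cite[Theorem 1.0.1]{hassett_2000}, so there is nothing in the present paper to compare your argument against. Your sketch is correct and follows essentially Hassett's original strategy: the congruence $d\equiv 0,2\ [6]$ from the characteristic-vector property of $h^2$ in $L\simeq E_8^{\oplus 2}\oplus U^{\oplus 2}\oplus I_{3,0}$, existence of a primitive embedding $K_d\hookrightarrow L$ via Nikulin, and irreducibility via the single-orbit statement (Eichler/Nikulin). The only point worth flagging is your appeal to Laza--Looijenga to rule out $d=2,6$; this is logically valid but anachronistic, since Hassett's 2000 paper predates that work and handles these two discriminants directly (discriminant $2$ forces a short root in $K_d^{\perp}$ giving a singular cubic, discriminant $6$ corresponds to the determinantal locus), so if you want a self-contained proof in the spirit of the cited reference you should replace that step with Hassett's own argument.
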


\begin{prop}\cite[Theorem 3.1]{2019arXiv190501936Y} \label{YY}
 Any two Hassett divisors intersect i.e $\C_{d_1} \cap \C_{d_2} \ne \emptyset$ for any integers $d_1$ and $d_2$ satisfying $(*)$
\end{prop}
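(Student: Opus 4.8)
The plan is to recast the problem as a question about lattices and then appeal to the global Torelli theorem for cubic fourfolds. A smooth cubic fourfold $X$ belongs to $\C_{d_1}\cap\C_{d_2}$ exactly when its lattice of algebraic cycles $A(X)$ contains two labellings, of discriminants $d_1$ and $d_2$, each containing the class $h^2$. Since every labelling has rank two and all of them share $h^2$, two such labellings generate a sublattice of rank at most three. It therefore suffices to exhibit one positive definite lattice $M$ of rank three, equipped with a distinguished primitive vector of square $3$ in the role of $h^2$, that contains saturated rank two sublattices of discriminants $d_1$ and $d_2$, and then to realise $M$ as (a primitive sublattice of) $A(X)$ for an honest smooth cubic fourfold $X$.

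To build $M$ I would fix a basis $\{h^2,S_1,S_2\}$ with Gram matrix
\[
G=\begin{pmatrix} 3 & x_1 & x_2 \\ x_1 & s_1 & y \\ x_2 & y & s_2 \end{pmatrix},
\]
and choose $(x_i,s_i)\in\Z^2$ with $3s_i-x_i^2=d_i$, which is possible for every $d_i$ satisfying $(*)$: one takes $x_i=0$ when $d_i\equiv 0\,[6]$ and $x_i=1$ when $d_i\equiv 2\,[6]$, in either case with $s_i>0$. By construction $\langle h^2,S_i\rangle$ is a labelling of discriminant exactly $d_i$, and the cross term $y=\langle S_1,S_2\rangle$ remains a free integer. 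The first two leading principal minors of $G$ equal $3$ and $d_1$, both positive, while $\det G$ is a downward parabola in $y$ whose positivity interval has length of order $\sqrt{d_1d_2}$; hence $G$ is positive definite for every integer $y$ in a nonempty interval that widens as $d_1,d_2$ grow.

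To pass from the abstract lattice $M$ to a geometric cubic fourfold I would invoke the refined Torelli theorem: by Hassett \cite{hassett_2000}, together with the description of the image of the period map due to Laza and to Looijenga, a positive definite lattice with marked vector $h^2$ of square $3$ is the algebraic lattice of the generic member of the corresponding Noether--Lefschetz locus if and only if (i) it admits a primitive embedding into $L=H^4(X,\Z)$ carrying the marked vector to $h^2$, and (ii) it contains no labelling of discriminant $2$ or $6$, these being precisely the two divisors removed from the period space. For (i) I would take orthogonal complements: the sublattice $M_0=\langle h^2\rangle^{\perp}\cap M$ is an even positive definite lattice of rank two (evenness is a short computation using that each $d_i$ is even), and the problem reduces, via Nikulin's theory, to a primitive embedding $M_0\hookrightarrow L_0\simeq E_8^{1}\oplus E_8^{2}\oplus U^{1}\oplus U^{2}\oplus A_2$ matching the relevant discriminant data. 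As $L_0$ has signature $(20,2)$, large corank relative to $M_0$, and discriminant group only $\Z/3$, Nikulin's existence criterion is comfortably met, so (i) holds.

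The main obstacle is condition (ii), which must be checked for the whole lattice $M$ rather than just for the two chosen labellings. A vector $\alpha h^2+\beta S_1+\gamma S_2$ spans, together with $h^2$, a labelling whose discriminant is an explicit integral binary quadratic form in $(\beta,\gamma)$ with coefficients depending on $G$, hence on $y$; equivalently, one must prevent the even form $M_0$ from primitively representing the values that correspond to discriminants $2$ and $6$. Since these are only finitely many forbidden situations, whereas the positivity interval for $y$ produced above is long, I expect a permissible value of $y$ can always be selected, the genuinely tight cases being those in which some $d_i$ is close to the boundary value $8$; I would dispose of these by a direct finite verification. Once such a $y$ is fixed, both (i) and (ii) hold, so the generic cubic fourfold in the Noether--Lefschetz locus determined by $M$ is smooth and lies simultaneously in $\C_{d_1}$ and $\C_{d_2}$, proving $\C_{d_1}\cap\C_{d_2}\neq\emptyset$.
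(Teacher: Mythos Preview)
The paper does not prove this proposition; it is quoted from Yang--Yu. The method they use, however, is visible in the present paper's own proofs from Lemma~\ref{lem3} onward, and it follows your outline with one decisive implementation change: instead of an abstract Gram matrix plus Nikulin's embedding criterion, one writes down explicit vectors inside the fixed decomposition $L_0\simeq E_8^1\oplus E_8^2\oplus U^1\oplus U^2\oplus A_2$, namely $\alpha_i=e^i_1+n_ie^i_2\in U^i$ when $d_i=6n_i$, with a unit vector from $I_{3,0}$ adjoined when $d_i\equiv 2\,[6]$. This makes the primitive embedding $M\hookrightarrow L$ tautological and forces the cross term $y=\langle\alpha_1,\alpha_2\rangle$ to be $0$, so the form on $M$ is $3x_1^2+2n_1x_2^2+2n_2x_3^2$ (or a minor variant) and the bound $\langle v,v\rangle\geq 3$ for nonzero $v$ is immediate.

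Your proposal has a genuine gap at precisely the step this explicit construction trivialises: condition~(ii). You flag it yourself as ``the main obstacle'' but then only say you ``expect a permissible value of $y$ can always be selected'' and would handle the tight cases ``by a direct finite verification''. Neither the expectation nor the verification is actually carried out. Without ruling out roots in $M$, the locus $\C_M$ could fall entirely outside the image of the period map, and nothing is proved. The explicit placement of $\alpha_1,\alpha_2$ in orthogonal hyperbolic summands is not cosmetic; it is exactly what converts your open-ended search over $y$ into a one-line inequality, and it is the reason the argument in Yang--Yu (and in this paper) closes where yours does not.
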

 
 \begin{rem}
 If $X$ is a cubic fourfold in $\C_{d_1} \cap \C_{d_2}$, then there exists infinite divisors $\C_d$ containing $X$. In fact, $A(X)$ is a rank 3 lattice generated by $h^2$ a rank 2 sublattice $(h^2)^\perp$. For every vector $u$ in the sublattice $(h^2)^\perp$, we have that $u^\perp\subset A(X)$ is of rank 2 and of course contains $h^2$. This defines a labelling, that correspeonds to a divisor containing $X$. Moving $u$ appropriately we obtain as many different labellings as we want.
 \end{rem}
\medskip
We denote by $(**)$ the following condition on the discriminant $d$:
\begin{center}
    $d=6\displaystyle{\prod_i}p_i^2$ or $d=6\displaystyle{\prod_i}p_i^2+2$, where $p_i$ is a prime number (not necessary distinct)
\end{center} 

The proof of the main theorem of this paper will be divided into different steps.

\begin{lem}\label{lem3}
We have $\C_{d_1} \cap \C_{d_2} \cap \C_{d_3} \ne \emptyset$ for any integers $d_1, d_2, d_3$ satisfying (*) such that $d_3$ satisfies (**).
\end{lem}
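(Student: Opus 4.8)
The plan is to produce a single smooth cubic fourfold lying in all three divisors by prescribing its lattice of algebraic classes, exploiting the mechanism isolated in the Remark. By Proposition \ref{YY} choose $X\in\C_{d_1}\cap\C_{d_2}$; then $A(X)$ is a positive definite lattice of rank $3$ containing $h^2$, and for every primitive $u$ in the rank-$2$ lattice $(h^2)^\perp\subset A(X)$ the saturated sublattice $u^\perp\cap A(X)\ni h^2$ is a labelling, so $X\in\C_{\delta(u)}$, where $\delta(u)$ is its discriminant. Because $X$ is an honest cubic fourfold, every such $\delta(u)$ automatically satisfies $(*)$ by Hassett's classification; hence Lemma \ref{lem3} reduces to the following lattice-theoretic claim: one may choose $X\in\C_{d_1}\cap\C_{d_2}$ — equivalently, choose an admissible rank-$3$ lattice containing labellings $K_{d_1}$ and $K_{d_2}$ — so that it also contains a saturated rank-$2$ sublattice of discriminant $d_3$ through $h^2$. (One checks at the outset that $(**)$ implies $(*)$: $6\prod_i p_i^2\equiv 0\ [6]$ and $6\prod_i p_i^2+2\equiv 2\ [6]$, both $\ge 8$ once a prime occurs.)

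First I would make the discriminants explicit. Pick a basis $h^2,v_1,v_2$ of $A(X)$ with $v_1$ generating $K_{d_1}/\Z h^2$, so that the Gram matrix is $\left(\begin{smallmatrix} 3 & a_1 & a_2\\ a_1 & b_1 & c\\ a_2 & c & b_2\end{smallmatrix}\right)$ with $3b_1-a_1^2=d_1$. A direct computation shows that for coprime $(s,t)$ the sublattice $\langle h^2,\,sv_1+tv_2\rangle$ is saturated with discriminant
$$Q(s,t)=s^2d_1+2st\,(3c-a_1a_2)+t^2(3b_2-a_2^2),$$
so $d_2=Q(s_0,t_0)$ for a suitable coprime pair and the goal becomes $d_3=Q(s_1,t_1)$ for some coprime $(s_1,t_1)$. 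The parameters at our disposal are $c\in\Z$, the representatives $a_i$ of the labellings (any integers in the residue class mod $3$ forced by $d_i\ [6]$) together with $b_i=(d_i+a_i^2)/3$; equivalently, the isometry type of the rank-$3$ lattice we use. This is exactly where hypothesis $(**)$ is spent: writing $d_3=6k^2$ (resp.\ $6k^2+2$) with $k=\prod_i p_i$, I would seek a solution with $s_1=1$ and $t_1$ a divisor of $k$, so that the divisibility $2t_1\mid d_3-d_1-t_1^2d_2$ can be met and the ``six times a square'' shape of $d_3$ makes the remaining congruence on $3c-a_1a_2$ modulo $3$ solvable after an allowed adjustment of $a_1,a_2$; the obstruction that would arise for a generic $d_3$ satisfying only $(*)$ (where the squarefree part of $d_3$ interferes) is precisely what $(**)$ dissolves.

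The main obstacle is realizability. One must certify that the rank-$3$ lattice $M$ obtained this way genuinely occurs as $A(X)$ (or as a sublattice of it) for a smooth cubic fourfold: that $M$ embeds primitively in $L=H^4(X,\Z)$ with orthogonal complement of signature $(19,2)$, and — crucially — that $M$ contains no vector of square $2$ and no ``discriminant-$6$'' labelling, the rank-$3$ analogue of the constraint $d>6$ underlying $(*)$. I would invoke the realizability criterion for positive definite lattices of rank $\le 21$ containing $h^2$ from \cite{2019arXiv190501936Y} (which itself rests on global Torelli for cubic fourfolds and surjectivity of the period map); the delicate point is that the parameter choices forced by the Diophantine step must not accidentally create a square-$2$ vector, which one controls by keeping $|3c-a_1a_2|$ small compared with $\sqrt{d_1d_2}$ and choosing the sizes and signs of $s_1,t_1,c$ with care. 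A logically equivalent but more economical route — which I might adopt in the write-up — is to build the rank-$3$ lattice $M$ with its three prescribed labellings directly and feed it to the same criterion, bypassing Proposition \ref{YY} altogether.
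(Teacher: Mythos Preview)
Your proposal is a sketch with acknowledged gaps rather than a proof, and the route it outlines is substantially harder than the paper's. The central step --- showing that a rank-$3$ lattice carrying labellings of discriminants $d_1,d_2$ can be tuned so that its binary form $Q(s,t)$ also primitively represents $d_3$, while remaining positive definite and free of square-$2$ vectors --- is never carried out; the phrases ``I would seek'', ``can be met'', and ``one controls by keeping $|3c-a_1a_2|$ small'' mark precisely the places where an argument is missing. Even the heuristic is shaky: with your normalisations one has $Q(s,t)\equiv -(sa_1+ta_2)^2\pmod 3$, so if $d_1\equiv d_2\equiv 0\ [6]$ and you take $v_1,v_2$ to generate $K_{d_1},K_{d_2}$ then $a_1\equiv a_2\equiv 0$ and $Q(s,t)\equiv 0\ [3]$ for all $s,t$, so no $d_3\equiv 2\ [6]$ is ever hit without first changing the basis --- a manoeuvre you do not perform.

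The paper avoids this representability problem entirely by working one rank higher. It builds an explicit rank-$4$ positive definite saturated sublattice $M=\langle h^2,\alpha_1,\alpha_2,\alpha_3\rangle\subset L$ in which the three labellings $K_{d_i}=\langle h^2,\alpha_i\rangle$ are mutually (nearly) orthogonal: $\alpha_1\in U^1$, $\alpha_2\in U^2$, and $\alpha_3\in A_2$, with a unit vector from $I_{3,0}$ added to the appropriate $\alpha_i$ whenever the residue of $d_i$ mod $6$ demands it. The check $\langle v,v\rangle\ge 3$ is then a short direct computation in each of four cases, after which the Yang--Yu criterion yields $\emptyset\ne\C_M\subset\C_{d_1}\cap\C_{d_2}\cap\C_{d_3}$. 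Hypothesis $(**)$ is spent in a completely concrete way, different from what you surmise: writing $d_3=6n_3$ or $6n_3+2$ with $n_3=\prod_i p_i^2$, the paper sets $\alpha_3=\sqrt{n_3}\,a_1$, and $(**)$ is exactly the statement that $\sqrt{n_3}\in\Z$ so that this is an honest lattice vector in $A_2$. No congruence on $3c-a_1a_2$, and no appeal to Proposition~\ref{YY}, ever enters.

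Your ``more economical route'' at the end --- building the lattice directly and invoking the criterion --- is the right instinct, but the paper's point is that going to rank $4$ (not $3$) decouples the three labellings and makes both the construction and the no-square-$2$ verification trivial; insisting on rank $3$ is what creates the Diophantine obstacle you then have to dissolve.
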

\begin{proof}
The proof is divided in multiple cases depending on the value of the discriminants.

\medskip
\underline{\textbf{Case 1}}: if $d_1, d_2, d_3 \equiv 0 [6]$ such that $d_1=6n_1, d_2=6n_2$ and $d_3=6n_3$, for $n_1, n_2 \ge 2$ and $n_3= \displaystyle{\prod_i} p_i^2$ with $p_i$ a prime number.\\

We consider the rank 4 lattice,\\

$h^2 \in M := <h^2,\alpha_1,\alpha_2,\alpha_3>$, with $\alpha_1 =e^1_1+n_1e^1_2, \alpha_2=e^2_1+n_2e^2_2$ and $\alpha_3= \sqrt{n_3}a_1$.\\

The Gram matrix of $M$ with respect to this basis is:
$$\begin{pmatrix}
3&0&0 & 0 \\!
0&2n_1&0& 0 \\
0&0&2n_2&0\\
0&0&0&2n_3\\
\end{pmatrix}$$
Therefore, $M$ is a positive definite saturated sublattice of $L$.\\
In addition, for any nonzero $v=x_1h^2+x_2\alpha_1+x_3\alpha_2+x_4\alpha_3$, where $x_1,..,x_4$ are integers not all zeros, we have
\begin{center}
    $<v,v>=3x_1^2+2n_1x_2^2+2n_2x_3^2+2n_3x_4^2 \geq 3$,
\end{center}
$\forall n_1,n_2 \geq 2, n_3$ as required.

\smallskip
We will write $\C_M \subset \C$ for the locus of smooth cubic fourfold such that there is a primitive embedding $M \subset A(X)$ preserving $h^2$.\\ In our case, $\C_M$ is nonempty of codimension 3 by \cite[Proposition 2.3]{2019arXiv190501936Y}. Moreover, we consider 
\begin{center}
$h^2 \in K_{d_1}:= <h^2, \alpha_1> \subset M$\\
$h^2 \in K_{d_2}:= <h^2, \alpha_2> \subset M$\\
$h^2 \in K_{d_3}:= <h^2, \alpha_3> \subset M$
\end{center}

of discriminant respectively $d_1,d_2$ and $d_3$.

By \cite[Lemma 2.4, Proposition 2.3]{2019arXiv190501936Y}, we have that $\emptyset \ne \C_M \subset \C_{d_1} \cap \C_{d_2} \cap \C_{d_3}$ as required.

\medskip
\underline{\textbf{Case 2}:} if $d_1, d_2\equiv 0 [6], d_3 \equiv 2 [6]$ such that $d_1=6n_1, d_2=6n_2$ and $d_3=6n_3+2$  for $ n_1,n_2 \geq 2$, and $n_3= \displaystyle{\prod_i} p_i^2$ with $p_i$ a prime number.\\

We consider, this time, the positive definite saturated rank 4 sublattice of $L$:
$$h^2 \in M := <h^2,\alpha_1,\alpha_2,\alpha_3+(0,0,1)>.$$

The Gram matrix of $M$ is:
$$\begin{pmatrix}
3&0&0 & 1 \\
0&2n_1&0& 0 \\
0&0&2n_2&0\\
1&0&0&2n_3+1\\
\end{pmatrix}$$

Additionally, for any nonzero $v=x_1h^2+x_2\alpha_1+x_3\alpha_2+x_4(\alpha_3+(0,0,1))$ we have that:
$$
\begin{aligned}
    <v,v>&=&3x_1^2+2n_1x_2^2+2n_2x_3^2+(2n_3+1)x_4^2+2x_1x_4\\&=& 2x_1^2+2n_1x_2^2+2n_2x_3^2+2n_3x_4^2+(x_1+x_4)^2 \geq 3
\end{aligned}$$
 $\forall n_1,n_2 \geq 2$ and $n_3=\displaystyle{\prod_i}p_i^2$.
Then, applying \cite[Proposition 2.3]{2019arXiv190501936Y}, we have that $\C_M$ is nonempty of codimension 3.\\
Moreover, we consider the saturated rank 2 sublattices of $M$:\\ 
\begin{center}
$h^2 \in K_{d_1}:= <h^2, \alpha_1>$ with Gram matrix
$\begin{pmatrix}
3&0\\
0&2n_1
\end{pmatrix}$\\

\medskip
$h^2 \in K_{d_2}:= <h^2, \alpha_2>$ with Gram matrix 
$\begin{pmatrix}
3&0\\
0&2n_2
\end{pmatrix}$\\

\medskip
\hspace{2cm}$h^2 \in K_{d_3}:= <h^2, \alpha_3+(0,0,1)>$  with Gram matrix 
$\begin{pmatrix}
3&1\\
1&2n_3+1
\end{pmatrix}$
\end{center}
Again, by \cite[Lemma 2.4]{2019arXiv190501936Y} and \cite[Proposition 2.3]{2019arXiv190501936Y}, we have that $\C_M \subset \C_{K_{d_1}}=\C_{d_1}$, $\C_M \subset \C_{K_{d_2}}=\C_{d_2}$ and $\C_M \subset \C_{K_{d_3}}=\C_{d_3}$. Consequently, $\C_{d_1} \cap \C_{d_2} \cap \C_{d_3} \ne \emptyset$ in this case as well.\\

\underline{\textbf{Case 3}:} if $d_1 \equiv 0 [6]$ and $d_2, d_3 \equiv 2 [6]$ such that $d_1=6n_1, d_2=6n_2+2$ and $d_3=6n_3+2$  for $n_1\geq 2, n_2 \geq 1$ and $n_3= \displaystyle{\prod_i} p_i^2$ with $p_i$ a prime number.\\

We consider the rank 4 lattice,

\begin{center}
$h^2 \in M := <h^2,\alpha_1,\alpha_2+ (0,1,0),\alpha_3+(0,0,1)>$
\end{center}

The Gram matrix of $M$ is:
$$\begin{pmatrix}
3&0&1 & 1 \\
0&2n_1&0& 0 \\
1&0&2n_2+1&0\\
1&0&0&2n_3+1\\
\end{pmatrix}$$
$M$ is a positive definite saturated sublattice of $L$. Moreover, for any nonzero $v=x_1h^2+x_2\alpha_1+x_3(\alpha_2+(0,1,0))+x_4(\alpha_3+(0,0,1)) \in M$,
$$
\begin{aligned}
    <v,v>&=&3x_1^2+2n_1x_2^2+(2n_2+1)x_3^2+(2n_3+1)x_4^2+2x_1x_3+2x_1x_4\\
    &=& x_1^2+2n_1x_2^2+2n_2x_3^2+2n_3x_4^2+(x_1+x_3)^2+(x_1+x_4)^2 \geq 3
\end{aligned}
$$
 $\forall n_1\geq 2, n_2 \geq 1,  n_3$ as required.
Then $\C_M$ is nonempty of codimension 3 by \cite[Proposition 2.3]{2019arXiv190501936Y}. In addition, we consider the saturated rank 2 sublattices of $M$:
\begin{center}
$h^2 \in K_{d_1}:= <h^2, \alpha_1>$ of Gram matrix
$\begin{pmatrix}
3&0\\
0&2n_1
\end{pmatrix}$\\

\hspace{2cm}$h^2 \in K_{d_2}:= <h^2, \alpha_2+(0,1,0)>$ of Gram matrix
$\begin{pmatrix}
3&1\\
1&2n_2+1
\end{pmatrix}$\\

\hspace{2cm}$h^2 \in K_{d_3}:= <h^2, \alpha_3+(0,0,1)>$  of Gram matrix 
$\begin{pmatrix}
3&1\\
1&2n_3+1
\end{pmatrix}$
\end{center}

Therefore, $\C_{d_1} \cap \C_{d_2} \cap \C_{d_3} \ne \emptyset$ in this case.\\

\underline{\textbf{Case 4}:} if $d_1, d_2, d_3 \equiv 2 [6]$ such that $d_1=6n_1+2, d_2=6n_2+2$ and $d_3=6n_3+2$ for  $n_1, n_2 \geq 1$ and $n_3= \displaystyle{\prod_i} p_i^2$ with $p_i$ a prime number.\\

We consider the lattice $M$ as follows

$$h^2 \in M := <h^2,\alpha_1+(1,0,0),\alpha_2+ (0,1,0),\alpha_3+(0,0,1)>$$

The Gram matrix of $M$ with respect to this basis is:
$$\begin{pmatrix}
3&1&1 & 1 \\
1&2n_1+1&0& 0 \\
1&0&2n_2+1&0\\
1&0&0&2n_3+1\\
\end{pmatrix}$$
$M$ is a rank 4 positive definite saturated sublattice of $L$.\\
Further, for any nonzero $v=x_1h^2+x_2(\alpha_1+(1,0,0))+x_3(\alpha_2+(0,1,0))+x_4(\alpha_3+(0,0,1)) \in M$, we have that:
$$\begin{aligned}
    <v,v>&=&3x_1^2+(2n_1+1)x_2^2+(2n_2+1)x_3^2+(2n_3+1)x_4^2+2x_1x_2+2x_1x_3+2x_1x_4\\&=& 2n_1x_2^2+2n_2x_3^2+2n_3x_4^2+(x_1+x_2)^2(x_1+x_3)^2+(x_1+x_4)^2 \geq 3
\end{aligned}$$
 $\forall n_1, n_2 \geq 1,  n_3$ as required. We consider rank 2 saturated sublattices of $M$:

\begin{center}
$h^2 \in K_{d_1}:= <h^2, \alpha_1+(1,0.0)>$\\
$h^2 \in K_{d_2}:= <h^2, \alpha_2+(0,1,0)>$\\
$h^2 \in K_{d_3}:= <h^2, \alpha_3+(0,0,1)>$
\end{center} 
Applying \cite[Lemma 2.4]{2019arXiv190501936Y} and \cite[Proposition 2.3]{2019arXiv190501936Y}, $\C_{d_1} \cap \C_{d_2} \cap \C_{d_3} \ne \emptyset$ in this case.\\
\end{proof}

\begin{rem}
The condition required on the third discriminant is a sufficient condition not necessary.
\end{rem}
\begin{rem}
Yang and Yu \cite[Theorem 3.3]{2019arXiv190501936Y} prove that $\C_{d_1} \cap \C_{d_2} \cap \C_{14} \ne \emptyset$ for $d_1$ and $d_2$ satisfying $(*)$.
\end{rem}

\begin{prop} \label{prop20}
 $\displaystyle{\bigcap_{k=1}^{20}} \C_{d_k} \ne \emptyset$, $\forall d_k \equiv 0[6]$ such that $d_3,..d_{20}=6 \displaystyle{\prod_i}p_i^2$.
\end{prop}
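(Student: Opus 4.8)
The plan is to rerun Case~1 of Lemma~\ref{lem3} once more, this time packing all twenty labellings into a single positive definite sublattice $M\subset L$ of rank $21$ — the largest rank a positive definite sublattice of $L$ can have, since $L$ has signature $(21,2)$. Write $d_k=6n_k$ (so $n_k\geq 2$ by $(*)$), with $n_1,n_2$ otherwise arbitrary and $n_k=\prod_i p_i^2$ for $3\leq k\leq 20$, so that $m_k:=\sqrt{n_k}\in\Z$ and $m_k\geq 2$ for those $k$. Using $L_0=E_8^1\oplus E_8^2\oplus U^1\oplus U^2\oplus A_2$, I would take $M:=\langle h^2,\alpha_1,\dots,\alpha_{20}\rangle$ where
$$\alpha_1=e_1^1+n_1e_2^1,\qquad \alpha_2=e_1^2+n_2e_2^2,\qquad \alpha_3=m_3a_1,\qquad \alpha_4=m_4a_2,$$
and, having fixed eight pairwise orthogonal roots $v_1^i,\dots,v_8^i$ inside each copy $E_8^i$ (they exist, e.g.\ inside the index $16$ sublattice $A_1^{\oplus 8}\subset E_8$), $\alpha_{4+j}:=m_{4+j}v_j^1$ and $\alpha_{12+j}:=m_{12+j}v_j^2$ for $j=1,\dots,8$. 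Then $\langle h^2,\alpha_k\rangle=0$ and $\langle\alpha_k,\alpha_k\rangle=2n_k$ for every $k$, while the only nonzero cross term is $\langle\alpha_3,\alpha_4\rangle=m_3m_4$.

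Consequently the Gram matrix of $M$ in this basis is block diagonal, with blocks $\langle 3\rangle$, the diagonal entries $2n_1,2n_2,2n_5,\dots,2n_{20}$, and the single $2\times2$ block $\begin{pmatrix}2n_3 & m_3m_4\\ m_3m_4 & 2n_4\end{pmatrix}$ of determinant $3n_3n_4>0$. A one-line check — each $n_k\geq 2$, $m_3,m_4\geq 2$, and $m_3^2x^2+m_3m_4xy+m_4^2y^2=m_3^2\bigl(x+\tfrac{m_4y}{2m_3}\bigr)^2+\tfrac34m_4^2y^2\geq 3$ for $(x,y)\neq(0,0)$ — shows that $M$ is positive definite with $\langle v,v\rangle\geq 3$ for every nonzero $v\in M$. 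One then checks, as in the proof of Lemma~\ref{lem3}, that $M$ is saturated in $L$, and \cite[Proposition 2.3]{2019arXiv190501936Y} gives that $\C_M$ is nonempty of codimension $20$ in $\C$. For each $k$ the rank two sublattice $K_{d_k}:=\langle h^2,\alpha_k\rangle\subset M$ is saturated (spanned by two members of a basis of $M$) and has discriminant $3\cdot2n_k=d_k$, so \cite[Lemma 2.4]{2019arXiv190501936Y} yields $\emptyset\neq\C_M\subseteq\C_{K_{d_k}}=\C_{d_k}$ for all $k$; hence $\bigcap_{k=1}^{20}\C_{d_k}\neq\emptyset$.

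The genuinely new point compared with Lemma~\ref{lem3}, and the step I expect to be the main obstacle, is a counting constraint. For twenty divisors $M$ must have rank $21$, so the twenty classes $\alpha_k$ have to span a rank $20$ positive definite sublattice of $L_0$ — the maximum possible, as $L_0$ has signature $(20,2)$. The summands $U^1,U^2,E_8^1,E_8^2$ absorb $1+1+8+8=18$ of them orthogonally, which forces the last two into $A_2$; but any two orthogonal vectors of $A_2$ have squares of the shape $2c^2$ and $6c'^2$, and $6c'^2$ is never twice a perfect square, so one cannot keep the $A_2$-block diagonal while realizing discriminants of the prescribed form. This is exactly why $\alpha_3,\alpha_4$ are taken along $a_1,a_2$, at the cost of the off-diagonal entry $m_3m_4$, which — as the determinant and the quadratic-form estimate above show — spoils neither positive definiteness nor the lower bound $\langle v,v\rangle\geq 3$. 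Beyond this, the argument is the same routine Gram matrix bookkeeping as in Case~1 of Lemma~\ref{lem3}.
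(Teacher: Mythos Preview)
Your argument is essentially the paper's: build a rank $21$ positive definite lattice $M\subset L$ containing $h^2$ and all twenty labellings, verify $\langle v,v\rangle\geq 3$, and invoke \cite[Proposition~2.3, Lemma~2.4]{2019arXiv190501936Y}. The one difference is cosmetic but pleasant: you place the sixteen $E_8$-classes along eight pairwise orthogonal roots (an $A_1^{\oplus 8}$ inside each $E_8^i$), whereas the paper uses the Dynkin basis $t_j^i$, so your Gram matrix is block-diagonal with a single $2\times 2$ $A_2$-block and the norm estimate is immediate, while the paper's Gram matrix carries all the $E_8$ off-diagonal terms $-\sqrt{n_an_b}$ and the check that no vector has norm $2$ is left to the reader (and to computer algebra). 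Both constructions handle the saturation claim in the same cursory way.
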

\begin{proof}
One can see that $\forall n \geq 4$, $\displaystyle{\bigcap_{k=1}^{n}} \C_{d_k} \ne \emptyset$.\\
Note that for $n \leq 3$, this formula is already proved (see Proposition \ref{YY}  and Lemma \ref{lem3}). We proceed next with the following cases.\\

\underline{For $n=4$:}  $d_1=6n_1, d_2=6n_2, d_3=6n_3, d_4=6n_4$ for $n_1,n_2 \geq 2$ and $n_3, n_4=\displaystyle{\prod_i} p_i^2$ with $p_i$ a prime number.\\

We consider the rank 5 lattice,\\

$h^2 \in M := <h^2,\alpha_1,\alpha_2,\alpha_3, \alpha_4>$, with $\alpha_1 =e^1_1+n_1e^1_2, \alpha_2=e^2_1+n_2e^2_2$, $\alpha_3= \sqrt{n_3}a_1$ and $\alpha_4=\sqrt{n_4}a_2$\\

$M$ is a positive definite saturated sublattice of $L$ with the following Gram matrix: $$\begin{pmatrix}
3&0&0 & 0&0\\
0&2n_1&0& 0&0 \\
0&0&2n_2&0&0\\
0&0&0&2n_3&\sqrt{n_3n_4}\\
0&0&0&\sqrt{n_3n_4}&2n_4\\
\end{pmatrix}.$$
Furthermore, for any nonzero $v=x_1h^2+x_2\alpha_1+x_3\alpha_3+x_4\alpha_3+x_5\alpha_4 \in M$, we have that
\begin{center}
    $<v,v>=3x_1^2+2n_1x_2^2+2n_2x_3^2+n_3x_4^2+n_4x_5^2 + (\sqrt{n_3}x_4+\sqrt{n_4}x_5)^2 \geq 3$
\end{center}
$\forall n_1,n_2 \geq 2$, $n_3, n_4= \displaystyle{\prod_i} p_i^2$ with $p_i$ a prime number (thus $n_3, n_4 \geq 4$).
$\C_M$ is then nonempty of codimension 4. Moreover, we consider rank 2 saturated sublattices of $M$:
$$h^2 \in K_{d_i}:= <h^2, \alpha_i>, i:1,..,4$$
 of discriminant respectively $d_i$.
 By \cite[Lemma 2.4]{2019arXiv190501936Y} and \cite[Proposition 2.3]{2019arXiv190501936Y}, We have that $\emptyset \ne \C_M \subset \C_{d_1} \cap \C_{d_2} \cap \C_{d_3} \cap \C_{d_4}$ with the required conditions on $d_i$.

\medskip
We keep checking the non emptiness of $\displaystyle{\bigcap_{k=1}^{n}} \C_{d_k}$, for $5\leq n \leq 19$, using the same method. We leave details of these steps to the interested readers, however we mention last step.\\

\medskip
\underline{For n=20}: if $d_i=6n_i$, $i=1,..,20$, with $n_1,n_2 \geq 2$ and $n_3,..,n_{20}=\displaystyle{\prod_i} p_i^2$ with $p_i$ a prime number.\\we consider the rank 21 lattice $M$ generated by:
\begin{center}

$ < h^2,\alpha_1,\alpha_2,\sqrt{n_3}a_1,\sqrt{n_4}a_2,\sqrt{n_5}t^1_1,\sqrt{n_6}t^1_3,\sqrt{n_7}t^1_6,\sqrt{n_8}t^2_1,\sqrt{n_9}t^2_3,\sqrt{n_{10}}t^2_6,\sqrt{n_{11}}t^1_2,\sqrt{n_{12}}t^2_2,\sqrt{n_{13}}t^1_4$,\\ $\sqrt{n_{14}}t^2_4, \sqrt{n_{15}}t^1_7,\sqrt{n_{16}}t^2_7,\sqrt{n_{17}}t^1_8,\sqrt{n_{18}}t^2_8,\sqrt{n_{19}}t^1_5,\sqrt{n_{20}}t^2_5>$\end{center}

with the following Gram matrix:

$$\left(\begin{array}{ccc|ccc}
{}&A&{}&{}&B&{}\\
{}&{}&{}&{}&{}{}\\
\hline
{}&{}&{}&{}&{}{}\\
{}&B^T&{}&{}&C&{}
\end{array}\right)$$
such that

$$A=\begin{pmatrix}
3&0&0 & 0&0&0&0&0&0&0&0\\
0&2n_1&0& 0&0&0&0&0&0&0&0 \\
0&0&2n_2&0&0&0&0&0&0&0&0\\
0&0&0&2n_3&\sqrt{n_3n_4}&0&0&0&0&0&0\\
0&0&0&\sqrt{n_3n_4}&2n_4&0&0&0&0&0&0\\
0&0&0&0&0&2n_5&0&0&0&0&0\\
0&0&0&0&0&0&2n_6&0&0&0&0\\
0&0&0&0&0&0&0&2n_7&0&0&0\\
0&0&0&0&0&0&0&0&2n_8&0&0\\
0&0&0&0&0&0&0&0&0&2n_9&0\\
0&0&0&0&0&0&0&0&0&0&2n_{10}\\
\end{pmatrix}$$

\vspace{1cm}
\begin{center}
\scalebox{0.9}
{$B=\begin{pmatrix} 
0&0&0&0&0&0&0&0&0&0\\
0&0&0&0&0&0&0&0&0&0\\
0&0&0&0&0&0&0&0&0&0\\
0&0&0&0&0&0&0&0&0&0\\
0&0&0&0&0&0&0&0&0&0\\
-\sqrt{n_{5}n_{11}}&0&0&0&0&0&0&0&0&0\\
-\sqrt{n_{6}n_{11}}&0&-\sqrt{n_6n_{13}}&0&0&0&0&0&-\sqrt{n_6n_{19}}&0\\
0&0&0&0&-\sqrt{n_7n_{15}}&0&0&0&-\sqrt{n_7n_{19}}&0\\
0&-\sqrt{n_8n_{12}}&0&0&0&0&0&0&0&0\\
0&-\sqrt{n_9n_{12}}&0&-\sqrt{n_9n_{14}}&0&0&0&0&0&-\sqrt{n_9n_{20}}\\
0&0&0&0&0&-\sqrt{n_{10}n_{16}}&0&0&0&-\sqrt{n_{10}n_{20}}\\
\end{pmatrix}$}\end{center}

\vspace{1cm}
$$C=
\begin{pmatrix}
2n_{11}&0&0&0&0&0&0&0&0&0\\
0&2n_{12}&0&0&0&0&0&0&0&0\\
0&0&2n_{13}&0&0&0&0&0&0&0\\
0&0&0&2n_{14}&0&0&0&0&0&0\\
0&0&0&0&2n_{15}&0&-\sqrt{n_{15}n_{17}}&0&0&0\\
0&0&0&0&0&2n_{16}&0&-\sqrt{n_{16}n_{18}}&0&0\\
0&0&0&0&-\sqrt{n_{15}n_{17}}&0&2n_{17}&0&0&0\\
0&0&0&0&0&-\sqrt{n_{16}n_{18}}&0&2n_{18}&0&0\\
0&0&0&0&0&0&0&0&2n_{19}&0\\
0&0&0&0&0&0&0&0&0&2n_{20}\\
\end{pmatrix}
$$\\\\
$h^2 \in M \subset L$ is a positve definite saturated lattice. Moreover, for all $v \in M$, we have that

\begin{center}
    $<v,v>\ne 2$.
\end{center}
for $n_1,n_2 \geq 2$, $n_3,..,n_{20}$ as required. Then, by \cite[Propsition 2.3, Lemma 2.4]{2019arXiv190501936Y}, $\C_M$ is nonempty of codimension 20.\\ Moreover, we consider the rank 2 saturated sublattices $K_{d_i} \subset M$ generated by $h^2$ and another element from the basis of $M$: $\alpha_1,\alpha_2,\sqrt{n_3}a_1,\sqrt{n_4}a_2,\sqrt{n_5}t^1_1,\sqrt{n_6}t^1_3,\sqrt{n_7}t^1_6,\sqrt{n_8}t^2_1,\sqrt{n_9}t^2_3,\\\sqrt{n_{10}}t^2_6,\sqrt{n_{11}}t^1_2,\sqrt{n_{12}}t^2_2,\sqrt{n_{13}}t^1_4$, $\sqrt{n_{14}}t^2_4, \sqrt{n_{15}}t^1_7,\sqrt{n_{16}}t^2_7,\sqrt{n_{17}}t^1_8,\sqrt{n_{18}}t^2_8,\sqrt{n_{19}}t^1_5,\sqrt{n_{20}}t^2_5$. $K_{d_i}$ are of discriminant $d_i$ respectively. Applying \cite[Propsition 2.3, Lemma 2.4]{2019arXiv190501936Y} again, we obtain $\emptyset \ne \C_M \subset \C_{d_1} \cap .. \cap \C_{d_{20}}$ with the required conditions on $d_i$.
\end{proof}

In order to prove the result for a more general case, let us show now that $\displaystyle{\bigcap_k \C_{d_k}} \ne \emptyset$ for $d_k$ satisfying $(*)$ and $d_3,..,d_{20}$ satisfy $(**)$ also.

\begin{thm}\label{thm}
We have $\displaystyle{\bigcap_{k=1}^{20}} \C_{d_k} \ne \emptyset$ for $d_k$ satisfying $(*)$ and $d_3,..,d_{20}= 6 \displaystyle{\prod_i} p_i^2$ or $6 \displaystyle{\prod_i} p_i^2+2$, with $p_i$ a prime number.
\end{thm}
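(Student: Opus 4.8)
The plan is to combine the two ingredients already in place. Proposition~\ref{prop20} builds, when all $d_k\equiv 0\,[6]$, a rank~$21$ positive definite saturated sublattice $h^2\in M\subset L$ carrying rank~$2$ saturated sublattices $K_{d_k}=\langle h^2,g_k\rangle$ of the prescribed discriminants; Lemma~\ref{lem3} shows how to turn a discriminant $\equiv 0\,[6]$ into one $\equiv 2\,[6]$ by replacing a generator $g$ by $g+u$ with $u\in\{(1,0,0),(0,1,0),(0,0,1)\}\subset I_{3,0}$, which raises $\langle g,g\rangle$ by $1$ and produces the pairing $\langle g+u,h^2\rangle=1$. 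Writing $d_k=6n_k$ or $d_k=6n_k+2$ according to its residue (so $n_k\geq 2$, resp.\ $n_k\geq 1$, by $(*)$, and $n_k=(\prod_i p_i)^2\geq 4$ a perfect square for $k\geq 3$ by $(**)$), I would take the $21$ generators of Proposition~\ref{prop20} --- the hyperbolic-plane vectors $\alpha_1=e^1_1+n_1e^1_2$, $\alpha_2=e^2_1+n_2e^2_2$, the $A_2$-vectors $\sqrt{n_3}a_1,\sqrt{n_4}a_2$, and the $16$ scaled $E_8$-roots $\sqrt{n_k}t^i_j$ --- and, for each $k$ with $d_k\equiv 2\,[6]$, replace $g_k$ by $g_k':=g_k+u_k$ where $u_k$ is one of the three standard vectors of $I_{3,0}$, chosen orthogonal to the $I_{3,0}$-component of $g_k$ (for $k\notin\{3,4\}$ any of the three works since $g_k\perp I_{3,0}$; for $k\in\{3,4\}$ one uses the vector killing $a_1$, resp.\ $a_2$, exactly as in Cases~2--4 of Lemma~\ref{lem3}). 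The one new point relative to Lemma~\ref{lem3} is that there are only three standard vectors but up to $20$ shifts, so some $u_k$ must recur; this is harmless, because the $g_k'$ remain $\Z$-linearly independent, so $M$ still has rank~$21$ and each $K_{d_k}:=\langle h^2,g_k'\rangle$ is still saturated in $M$ with Gram matrix $\bigl(\begin{smallmatrix}3&1\\1&2n_k+1\end{smallmatrix}\bigr)$, hence of discriminant $d_k$.

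The verification then proceeds exactly as in Proposition~\ref{prop20} and Lemma~\ref{lem3}. One writes down the $21\times 21$ Gram matrix of $M$ --- that of Proposition~\ref{prop20}, corrected by $+1$ in the diagonal slot of each shifted generator, by $+1$ in each slot $\langle h^2,g_k'\rangle$ with $d_k\equiv 2\,[6]$, by $+\langle u_k,u_{k'}\rangle\in\{0,1\}$ in the remaining $(g_k',g_{k'}')$-slots, and by the modifications of the $A_2$-block already appearing in Lemma~\ref{lem3} --- and checks, as there, that $h^2\in M\subset L$ is a positive definite saturated sublattice and that $\langle v,v\rangle\neq 2$ for every nonzero $v\in M$, which is the input required by \cite[Proposition~2.3]{2019arXiv190501936Y}. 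For the latter, use the orthogonal splitting $L=E_8^1\oplus E_8^2\oplus U^1\oplus U^2\oplus I_{3,0}$: for $v=x_0h^2+\sum_kx_kg_k'$ the norm is $\langle v,v\rangle=\|v_{E_8}\|^2+\|v_{U^1}\|^2+\|v_{U^2}\|^2+\|v_{I_{3,0}}\|^2$, with $v_{I_{3,0}}$ built from $x_0h^2$, the $A_2$-terms $x_3\sqrt{n_3}a_1+x_4\sqrt{n_4}a_2$ and the shifts $\sum_kx_ku_k$, and $v_{E_8}=\sum_k x_k\sqrt{n_k}t^i_j$ a scaled $E_8$-vector whose diagonal entries are $2n_k\geq 8$ ($E_8$-roots being used only for indices with $n_k\geq 4$). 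Completing the square as in the cited proofs gives $\langle v,v\rangle\geq 3$, except when $v$ is supported on $h^2,g_1',g_2'$, which is a subcase of the rank $\leq 4$ computations already in the proof of Lemma~\ref{lem3}.

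With $M$ in hand, \cite[Proposition~2.3]{2019arXiv190501936Y} gives that $\C_M$ is nonempty of codimension~$20$, and \cite[Lemma~2.4]{2019arXiv190501936Y} applied to the $K_{d_k}\subset M$ gives $\emptyset\neq\C_M\subseteq\C_{d_1}\cap\dots\cap\C_{d_{20}}$; the statement for $3\leq n\leq 20$ then follows by padding with further divisors satisfying $(**)$, e.g.\ $d_{n+1}=\dots=d_{20}=24$, and intersecting. The step I expect to be the real obstacle is the check $\langle v,v\rangle\neq 2$: with the three correction vectors reused many times and interacting with the $E_8$-adjacency cross terms $-\sqrt{n_an_b}$ and the modified $A_2$-cross terms, the sum-of-squares rearrangement is considerably bulkier than in Lemma~\ref{lem3}, and one must fix the assignment $k\mapsto u_k$ and the basis $a_1,a_2$ of $A_2\subset I_{3,0}$ with some care so that no norm-$2$ vector slips through; once such a choice is pinned down, the verification is a finite, if lengthy, computation of exactly the kind carried out in the paper for $n\leq 3$ and, in the all-$0\,[6]$ case, for $n=20$.
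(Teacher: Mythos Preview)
Your proposal is correct and follows essentially the same route as the paper: one starts from the rank~$21$ lattice of Proposition~\ref{prop20}, shifts each generator $g_k$ with $d_k\equiv 2\,[6]$ by a standard basis vector of $I_{3,0}$ (reusing the three vectors as needed), and then verifies positive definiteness, saturation, and the absence of norm~$2$ vectors so that \cite[Proposition~2.3, Lemma~2.4]{2019arXiv190501936Y} apply. The paper organizes this as a case analysis on the number of $d_k\equiv 2\,[6]$ (spelled out for $n=4$ and for the extreme case $n=20$ with all $d_k\equiv 2\,[6]$, the remaining cases being left to the reader), whereas you package it as a single parametrized construction, but the content is the same and your identification of the $\langle v,v\rangle\neq 2$ check as the labor-intensive step matches the paper exactly.
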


\begin{proof}
Let us consider $\displaystyle{\bigcap_{k=1}^{n}} \C_{d_k}$.\\
\underline{For $n=2$:} see \cite[Theorem 3.1]{2019arXiv190501936Y}.\\
\underline{For $ n= 3$}: see Lemma \ref{lem3}.\\
\underline{For $1\leq n \leq 20$}:  for $d_k \equiv 0 [6]$, $d_3,..,d_{20}=6 \displaystyle{\prod_i} p_i^2$, see Proposition \ref{prop20}.\\
In the following, we regroup some steps to prove the Theorem \ref{thm}. Details are left to the interested readers.\\
\underline{For $n=4$:}

\medskip
\underline{\textbf{Case 1}:} proved before in Proposition \ref{prop20}.

\medskip
\underline{\textbf{Case 2}:} if $d_1=6n_1, d_2=6n_2, d_3=6n_3, d_4=6n_4+2$ such that $n_1, n_2 \geq 2$ and $n_3, n_4= \displaystyle{\prod_i} p_i^2$ with $p_i$ a prime number.\\

We consider the rank 5 lattice,\\

$h^2 \in M := <h^2,\alpha_1,\alpha_2,\alpha_3, \alpha_4+(0,0,1)>$, with $\alpha_1 =e^1_1+n_1e^1_2, \alpha_2=e^2_1+n_2e^2_2$, $\alpha_3= \sqrt{n_3}a_1$ and $\alpha_4=\sqrt{n_4}a_2$.\\

The Gram matrix of $M$ with respect to this basis is:
$$\begin{pmatrix}
3&0&0 & 0&1\\
0&2n_1&0& 0&0 \\
0&0&2n_2&0&0\\
0&0&0&2n_3&\sqrt{n_3n_4}\\
1&0&0&\sqrt{n_3n_4}&2n_4+1\\
\end{pmatrix}.$$
$M$ is a positive definite saturated sublattice of $L$.\\
For all $v \in M$ $v=x_1h^2+x_2\alpha_1+x_3\alpha_2+x_4\alpha_3+x_5(\alpha_4+(0,0,1)) \in M$, we have that
\begin{center}
    $<v,v>=2x_1^2+2n_1x_2^2+2n_2x_3^2+n_3x_4^2+n_4x_5^2 + (\sqrt{n_3}x_4+\sqrt{n_4}x_5)^2 + (x_1+x_5)^2 \ne 2$
\end{center}
for all $n_1,n_2\geq 2$, $n_3, n_4=\displaystyle{\prod_i} p_i^2$ with $p_i$ a prime number (thus $n_3, n_4 \geq 4$).
Then $\C_M$ is nonempty of codimension 4. Moreover, we consider \\
$$h^2 \in K_{d_i}:= <h^2, \alpha_i>, i:1,..,4.$$
 Hence, by \cite[Lemma 2.4, Proposition 2.3]{2019arXiv190501936Y}, we have that $\emptyset \ne \C_M \subset \C_{d_1} \cap \C_{d_2} \cap \C_{d_3} \cap \C_{d_4}$ with the required conditions on $d_i$ in this case.
 
\medskip
\underline{\textbf{Case 3:}}
if $d_1=6n_1, d_2=6n_2, d_3=6n_3+2, d_4=6n_4+2$ such that $n_1,n_2\geq 2$ and $n_3, n_4= \displaystyle{\prod_i} p_i^2$ with $p_i$ a prime number.\\

We consider the rank 5 lattice,

$$h^2 \in M := <h^2,\alpha_1,\alpha_2,\alpha_3+(0,1,0), \alpha_4+(0,0,1)>,$$ 

with $\alpha_1 =e^1_1+n_1e^1_2, \alpha_2=e^2_1+n_2e^2_2$, $\alpha_3= \sqrt{n_3}a_1$ and $\alpha_4=\sqrt{n_4}a_2$.\\

The Gram matrix of $M$ is:
$$\begin{pmatrix}
3&0&0 & 1&1\\
0&2n_1&0& 0&0 \\
0&0&2n_2&0&0\\
1&0&0&2n_3+1&\sqrt{n_3n_4}\\
1&0&0&\sqrt{n_3n_4}&2n_4+1\\
\end{pmatrix}$$
$M$ is a positive definite saturated sublattice of $L$.\\
In addition, for all $v \in M$ $v=x_1h^2+x_2\alpha_1+x_3\alpha_2+x_4(\alpha_3+(0,1,0))+x_5(\alpha_4+(0,0,1)) \in M$, we have that
\begin{center}
    $<v,v>=x_1^2+2n_1x_2^2+2n_2x_3^2+n_3x_4^2+n_4x_5^2 + (\sqrt{n_3}x_4+\sqrt{n_4}x_5)^2 + (x_1+x_5)^2+ (x_1+x_4)^2\ne 2$
\end{center}
for all $n_1,n_2\geq 2$, $n_3, n_4=\displaystyle{\prod_i} p_i^2$ with $p_i$ a prime number. Then $\C_M$ is nonempty of codimension 4 by \cite[Proposition 2.3]{2019arXiv190501936Y}. Moreover, we consider \\
$$h^2 \in K_{d_i}:= <h^2, \alpha_i>, i:1,..,4.$$\\
By \cite[Lemma 2.4, Proposition 2.3]{2019arXiv190501936Y}, we have that $\emptyset \ne \C_M \subset \C_{d_1} \cap \C_{d_2} \cap \C_{d_3} \cap \C_{d_4}$ with the required conditions on $d_i$ in this case.

\medskip
\underline{\textbf{Case 4:}} if
 $d_1=6n_1, d_2=6n_2+2, d_3=6n_3+2, d_4=6n_4+2$ such that $n_1\geq 2$, $n_2 \ge 1$ and $n_3, n_4= \displaystyle{\prod_i} p_i^2$ with $p_i$ a prime number.\\

We consider the rank 5 lattice,\\
$$h^2 \in M := <h^2,\alpha_1,\alpha_2+(1,0,0),\alpha_3+(0,1,0), \alpha_4+(0,0,1)>.$$

The Gram matrix of $M$ is:
$$\begin{pmatrix}
3&0&1 & 1&1\\
0&2n_1&0& 0&0 \\
1&0&2n_2+1&0&0\\
1&0&0&2n_3+1&\sqrt{n_3n_4}\\
1&0&0&\sqrt{n_3n_4}&2n_4+1\\
\end{pmatrix}.$$
Hence, $M$ is a positive definite saturated sublattice of $L$.\\
Furthermore, for all $v \in M$, $v=x_1h^2+x_2\alpha_1+x_3(\alpha_2+(1,0,0))+x_4(\alpha_3+(0,1,0))+x_5(\alpha_4+(0,0,1)) \in M$, we have that
\begin{center}
    $<v,v>=2n_1x_2^2+2n_2x_3^2+n_3x_4^2+n_4x_5^2 + (\sqrt{n_3}x_4+\sqrt{n_4}x_5)^2 + (x_1+x_5)^2+ (x_1+x_4)^2 +(x_1+x_3)^2\ne 2$
\end{center}
for all $n_1\ge2$, $n_2\geq1$ and $n_3, n_4= \displaystyle{\prod_i} p_i^2$ with $p_i$ a prime number (thus $n_3, n_4 \geq 4$).
Then $\C_M$ is nonempty of codimension 4. Moreover, we consider
$$h^2 \in K_{d_i}:= <h^2, \alpha_i>, i:1,..,4.$$
 We have then that $\emptyset \ne \C_M \subset \C_{d_1} \cap \C_{d_2} \cap \C_{d_3} \cap \C_{d_4}$ with the required conditions on $d_i$ in this case.
 
 \medskip
 \underline{\textbf{Case 5:}}
 if $d_1=6n_1+2, d_2=6n_2+2, d_3=6n_3+2, d_4=6n_4+2$ such that $n_1,n_2 \geq 1$ and $n_3, n_4= \displaystyle{\prod_i} p_i^2$ with $p_i$ a prime number.\\

We consider the rank 5 lattice,\\
$$h^2 \in M := <h^2,\alpha_1+(1,0,0),\alpha_2+(1,0,0),\alpha_3+(0,1,0), \alpha_4+(0,0,1)>$$

with the following Gram matrix:
$$\begin{pmatrix}
3&1&1 & 1&1\\
1&2n_1+1&0& 0&0 \\
1&0&2n_2+1&0&0\\
1&0&0&2n_3+1&\sqrt{n_3n_4}\\
1&0&0&\sqrt{n_3n_4}&2n_4+1\\
\end{pmatrix}$$
$M$ is a positive definite saturated sublattice of $L$.\\
For all $v \in M$,  $v=x_1h^2+x_2(\alpha_1+(1,0,0))+x_3(\alpha_2+(1,0,0))+x_4(\alpha_3+(0,1,0))+x_5(\alpha_4+(0,0,1)) \in M$, we have that
\begin{center}
    $<v,v>=2n_1x_2^2+2n_2x_3^2+n_3x_4^2+n_4x_5^2 + (\sqrt{n_3}x_4+\sqrt{n_4}x_5)^2 + (x_1+x_5)^2+ (x_1+x_4)^2+ (x_1+x_3)^2+(x_1+x_2)^2-x_1^2 \ne 2$
\end{center}
for all $n_1,n_2\geq 1$, $n_3, n_4= \displaystyle{\prod_i} p_i^2$ with $p_i$ a prime number. Then $\C_M$ is nonempty of codimension 4. We consider the rank 2 saturated sublattices \\
$$h^2 \in K_{d_i}:= <h^2, \alpha_i>, i:1,..,4.$$\\
Applying \cite[Lemma 2.4]{2019arXiv190501936Y}, we have that $\emptyset \ne \C_M \subset \C_{d_1} \cap \C_{d_2} \cap \C_{d_3} \cap \C_{d_4}$ with the required conditions on $d_i$ in this case.\\

\medskip
In the same way, one can continue to consider, for each $n$-intersection of Hassett divisors, certain positive definite saturated (n+1)-rank lattice $E$ and check that for a nonzero $v \in E$, $<v.v>$ is greater than 3 or equivalently  for all $v \in E$ $<v,v>$ is different of 2.\\

\medskip
Furthermore, in order to study the intersection of $20$ Hassett divisors $C_{d_k}$ such that all $d_k \equiv 2 [6]$, We consider, for example, the rank 21 lattice $M$, generated by  $h^2,\alpha_1+(0,1,0),\alpha_2+(1,0,0),\sqrt{n_3}a_1+(1,0,0),\sqrt{n_4}a_2+(0,1,0),\sqrt{n_5}t^1_1+(0,0,1),\sqrt{n_6}t^1_3+(0,0,1),\sqrt{n_7}t^1_6+(1,0,0),\sqrt{n_8}t^2_1+(0,1,0),\sqrt{n_9}t^2_3+(0,0,1),\sqrt{n_{10}}t^2_6+(0,0,1),\sqrt{n_{11}}t^1_2+(0,0,1),\sqrt{n_{12}}t^2_2+(0,0,1),\sqrt{n_{13}}t^1_4+(0,0,1),\sqrt{n_{14}}t^2_4+(1,0,0),\\ \sqrt{n_{15}}t^1_7+(0,1,0),\sqrt{n_{16}}t^2_7+(0,1,0),\sqrt{n_{17}}t^1_8+(0,1,0),\sqrt{n_{18}}t^2_8+(0,1,0),\sqrt{n_{19}}t^1_5+(0,1,0),\\ \sqrt{n_{20}}t^2_5+(0,1,0)$.\\

The Gram matrix of $M$ is:\\

$$\left(\begin{array}{ccc|ccc}
{}&A&{}&{}&B&{}\\
{}&{}&{}&{}&{}{}\\
\hline
{}&{}&{}&{}&{}{}\\
{}&B^T&{}&{}&C&{}
\end{array}\right)$$

with\\

\scalebox{0.8}
{$A=\begin{pmatrix}
3&1&1&1&1&1&1&1&1&1&1\\
1&2n_1+1&0& 0&1&0&0&0&1&0&0 \\
1&0&2n_2+1&1&0&0&0&1&0&0&0\\
1&0&1&2n_3+1&\sqrt{n_3n_4}&0&0&1&0&0&0\\
1&1&0&\sqrt{n_3n_4}&2n_4+1&0&0&0&1&0&0\\
1&0&0&0&0&2n_5+1&1&0&0&1&1\\
1&0&0&0&0&1&2n_6+1&0&0&1&1\\
1&0&1&1&0&0&0&2n_7+1&0&0&0\\
1&1&0&0&1&0&0&0&2n_8+1&0&0\\
1&0&0&0&0&1&1&0&0&2n_9+1&1\\
1&0&0&0&0&1&1&0&0&1&2n_{10}+1\\
\end{pmatrix}$}

\vspace{1cm}
\scalebox{0.8}
{$B=\begin{pmatrix} 

1&1&1&1&1&1&1&1&1&1\\
0&0&0&0&1&1&1&1&1&1\\
0&0&0&1&0&0&0&0&0&0\\
0&0&0&1&0&0&0&0&0&0\\
0&0&0&0&1&1&1&1&1&1\\
1-\sqrt{n_{5}n_{11}}&1&1&0&0&0&0&0&0&0\\
1-\sqrt{n_{6}n_{11}}&1&1-\sqrt{n_6n_{13}}&0&0&0&0&0&-\sqrt{n_6n_{19}}&0\\
0&0&0&1&-\sqrt{n_7n_{15}}&0&0&0&-\sqrt{n_7n_{19}}&0\\
0&-\sqrt{n_8n_{12}}&0&0&1&1&1&1&1&1\\
1&1-\sqrt{n_9n_{12}}&1&-\sqrt{n_9n_{14}}&0&0&0&0&0&-\sqrt{n_9n_{20}}\\
1&1&1&0&0&-\sqrt{n_{10}n_{16}}&0&0&0&-\sqrt{n_{10}n_{20}}\\
\end{pmatrix}$}\\

\vspace{1cm}
\scalebox{0.8}
{$C=\begin{pmatrix} 
2n_{11}+1&1&1&0&0&0&0&0&0&0\\
1&2n_{12}+1&1&0&0&0&0&0&0&0\\
1&1&2n_{13}+1&0&0&0&0&0&0&0\\
0&0&0&2n_{14}+1&0&0&0&0&0&0\\
0&0&0&0&2n_{15}+1&0&-\sqrt{n_{15}n_{17}}&0&0&0\\
0&0&0&0&0&2n_{16}+1&0&-\sqrt{n_{16}n_{18}}&0&0\\
0&0&0&0&-\sqrt{n_{15}n_{17}}&0&2n_{17}+1&0&0&0\\
0&0&0&0&0&-\sqrt{n_{16}n_{18}}&0&2n_{18}+1&0&0\\
0&0&0&0&0&0&0&0&2n_{19}+1&0\\
0&0&0&0&0&0&0&0&0&2n_{20}+1\\
\end{pmatrix}$}

\medskip
$M$ is a positive definite saturated sublattice of $L$. For all $v \in M, <v,v>
\ne 2$ $\forall n_1,n_2 \geq 1$, $n_k=\displaystyle{\prod_i}p_i^2$ with $p_i$ a prime number for $k=3,..,20$.\\ We consider rank 2 saturated sublattices $h^2 \in K_{d_i}$ of discriminant $d_i$ as before.\\ Therefore, by \cite[Lemma 2.4, Proposition 2.3]{2019arXiv190501936Y}, $\displaystyle{\bigcap_{k=1}^{20}} \C_{d_k} \ne \emptyset$ for all $d_k$ satisfying $(*)$ and $d_k$ satisfies $(**)$ for $k\geq 3$.
\end{proof}

One can choose all these divisors in a way such that their elements have associated K3 surfaces.
 
\begin{cor} \label{cor20}
Consider the rational cubic fourfolds $X$ inside the dimension 0 intersection $\C_{14} \cap \C_{38} \cap \C_{26} \cap \C_{98} \cap \C_{218} \cap \C_{294} \cap \C_{386} \cap \C_{602} \cap \C_{866} \cap \C_{1178} \cap \C_{1538} \cap \C_{1946} \cap \C_{2166} \cap \C_{2402} \cap \C_{2906} \cap \C_{3458} \cap \C_{4058} \cap \C_{4706} \cap \C_{6146} \cap \C_{6938}$ .

Now, cubic fourfolds in each of these divisors have associated polarized K3 surfaces. Hence, the K3 surface $S$ associated to each cubic in the intersection has 20 different polarizations, that is $rk(NS(S))=20$.
\end{cor}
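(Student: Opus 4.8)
I prove Corollary~\ref{cor20} by combining Theorem~\ref{thm} with Hassett's criterion for the existence of an associated K3 surface, and then by forcing the various associated K3's to coincide because the transcendental lattice of $X$ has rank $2$.

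\smallskip
The first step is to check that the twenty discriminants in the statement fall under Theorem~\ref{thm}. Set $d_1=14$, $d_2=38$, $d_3=26$ and let $d_4,\dots,d_{20}$ run through $98,218,294,386,602,866,1178,1538,1946,2166,2402,2906,3458,4058,4706,6146,6938$. All twenty satisfy $(*)$, being $\geq 8$ and $\equiv 2\pmod 6$; and for $d_3,\dots,d_{20}$ one exhibits, one value at a time, a presentation of type $6\prod_i p_i^2$ (for instance $294=6\cdot 7^2$, $2166=6\cdot 19^2$) or $6\prod_i p_i^2+2$ (for instance $26=6\cdot 2^2+2$, $98=6\cdot(2^2)^2+2$, $218=6\cdot(2\cdot 3)^2+2$, up to $6938=6\cdot(2\cdot 17)^2+2$), which is a finite bookkeeping. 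By Theorem~\ref{thm} there is then a non-empty locus $\C_M\subseteq\bigcap_{k=1}^{20}\C_{d_k}$, and since $\C_M$ has codimension $20$ inside the $20$-dimensional $\C$ it is a non-empty finite set of points; in particular the intersection contains a cubic fourfold $X$, which, lying in $\C_{14}\cap\C_{26}\cap\C_{38}$, is rational by \cite{MR3968870,MR3934590,russo2019trisecant}.

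\smallskip
The second step records that every $d_k$ is admissible in the sense of \cite{hassett_2000}, i.e. $4\nmid d_k$, $9\nmid d_k$, and no odd prime $p\equiv 2\pmod 3$ divides $d_k$. Writing $d_k=2m_k$, each $m_k$ that occurs is a product of primes $\equiv 1\pmod 3$ together with at most one factor of $3$ — once again a finite check — so no forbidden prime appears and neither $4$ nor $9$ divides $d_k$. Hence for every $k$ and every $X\in\C_{d_k}$, Hassett's construction produces a polarized K3 surface $(S_k,f_k)$ of degree $d_k$ and a Hodge isometry between the non-special cohomology $K_{d_k}^{\perp}\subseteq H^4(X,\Z)$ (where $K_{d_k}\subseteq A(X)$ is the labelling supplied by the proof of Theorem~\ref{thm}) and a Tate twist of the primitive cohomology $f_k^{\perp}\subseteq H^2(S_k,\Z)$.

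\smallskip
The core of the argument is to see that, for $X$ in the zero-dimensional intersection, the surfaces $S_1,\dots,S_{20}$ are one and the same. By construction the rank $21$ lattice $M$ embeds in $A(X)$; on the other hand $h^{3,1}(X)=1$ puts a nonzero $(3,1)$-class into $T(X):=A(X)^{\perp}$, so $\operatorname{rk}T(X)\geq 2$, which together with $\operatorname{rk}A(X)\geq 21$ forces $\operatorname{rk}A(X)=21$ and $\operatorname{rk}T(X)=2$; thus $T(X)$ is an irreducible weight-$4$ Hodge structure of rank $2$, and it is the minimal sub-Hodge structure of $H^4(X,\Z)$ through the $(3,1)$-part. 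Since $K_{d_k}\subseteq A(X)$ gives $T(X)\subseteq K_{d_k}^{\perp}$, the Hodge isometry of the previous step carries $T(X)$ onto a Tate twist of the transcendental lattice $T(S_k)$; hence all $T(S_k)$ agree, up to a fixed twist, with $T(X)$, so in particular they are mutually isometric rank-$2$ Hodge structures. A complex K3 surface is determined up to isomorphism by its transcendental Hodge structure (Torelli), and for $\rho=20$ such surfaces are classified by $\mathcal{SL}_2(\Z)$-classes of positive-definite even binary quadratic forms \cite{shioda_inose_1977}; therefore $S_1\cong\cdots\cong S_{20}=:S$. Finally $\operatorname{rk}NS(S)=22-\operatorname{rk}T(S)=20$, so $S$ is a singular K3, and $f_1,\dots,f_{20}$ are $20$ polarizations of $S$, pairwise distinct since their degrees $d_1,\dots,d_{20}$ are.

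\smallskip
I expect the identification $S_1\cong\cdots\cong S_{20}$ to be the only genuine obstacle. It uses the precise form of Hassett's correspondence — that the transcendental lattice of the associated K3 is exactly the Tate twist of $T(X)$, and not merely some abstract rank-$2$ lattice — and the Torelli/Shioda--Inose statement in the $\rho=20$ case. A secondary point to spell out is that, under the isometries in play, each $f_k$ lands in $NS(S)$ as an honest ample class of self-intersection $d_k$, so that the $f_k$ are genuine polarizations of the single surface $S$ rather than only of the a priori different $S_k$.
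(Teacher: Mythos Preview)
The paper offers no proof of this corollary at all: it is stated as an immediate consequence of Theorem~\ref{thm} together with Hassett's admissibility criterion, and the sentence ``has 20 different polarizations, that is $rk(NS(S))=20$'' is asserted without argument. Your write-up is correct and in fact supplies precisely the steps the paper leaves implicit. Two of these are substantive. First, the paper never explains why the twenty a priori different associated K3 surfaces $S_1,\dots,S_{20}$ coincide; you handle this by observing that $\mathrm{rk}\,A(X)=21$ forces $\mathrm{rk}\,T(X)=2$, then pushing $T(X)$ through Hassett's Hodge isometry $K_{d_k}^{\perp}\cong f_k^{\perp}$ onto each $T(S_k)$, and invoking Shioda--Inose in the $\rho=20$ case. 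Second, the paper's phrasing suggests that twenty distinct polarizations by themselves yield $\mathrm{rk}\,NS(S)=20$, which is of course false in general; your derivation via $\mathrm{rk}\,NS(S)=22-\mathrm{rk}\,T(S)=20$ is the right one.

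One cosmetic point: the clause ``a complex K3 surface is determined up to isomorphism by its transcendental Hodge structure (Torelli)'' is not true as stated for arbitrary K3 surfaces --- Torelli requires the full $H^2$. Since you immediately restrict to $\rho=20$ and cite \cite{shioda_inose_1977}, the argument is unaffected, but you should drop the general claim and go straight to Shioda--Inose. A second minor point, which you already flag yourself: the assertion that $\C_M$ has codimension $20$ (hence is zero-dimensional) is what Theorem~\ref{thm} actually produces; the paper's phrase ``dimension $0$ intersection'' for the full $\bigcap_k\C_{d_k}$ is not literally justified, and your choice to work with $X\in\C_M$ rather than a general point of the intersection is the honest formulation.
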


\begin{rem}
Equivalently, one can say that every $X$ mentioned in Corollary \ref{cor20} has an associated singular K3 surface.
\end{rem}
Additionally, one can conjecture the following:\\

\medskip
\textbf{Conjecture:} A divisor with discriminant $d=6.2^{2k} \displaystyle{\prod_i} p_i^2+2$ for $k \in \mathbb{N}^*$, $p_i$ a prime number, has always an associated K3 surface.

\begin{rem}
To prove the conjecture, one can see easily that, for $d=6.2^{2k} \displaystyle{\prod_i} p_i^2+2$, $4 \not | d$ and $9 \not | d$. However, it seems to be harder to see that for any odd prime number $p \equiv 2[3], p \not |d$. \end{rem}

\bibliography{bib_tocho}
\bibliographystyle{abbrv}

\end{document}